\documentclass[12pt]{article}
\usepackage{amsmath,amsthm,amsfonts,amssymb,epsfig}

\usepackage{graphicx}

\usepackage{subcaption}

\usepackage{tkz-graph}

\usepackage{tikz}

\thispagestyle{empty}

\bibliographystyle{plain}

\textheight 24cm
\textwidth 15cm
\topmargin -3cm
\oddsidemargin -0.1cm
\evensidemargin 5cm

\newcommand{\deter}{\operatorname{det}}

\newtheorem{stel}{Theorem}

\newtheorem{lemma}{Lemma}
\theoremstyle{remark}

\begin{document}

\title{An elementary proof of a matrix tree theorem for directed graphs}

\author{Patrick De Leenheer\footnote{Department of Mathematics and Department of Integrative Biology, Oregon State University, Supported in part by NSF-DMS-1411853, deleenhp@math.oregonstate.edu}}

\date{}

\maketitle

\begin{abstract}
We present an elementary proof of a generalization of Kirchoff's matrix tree theorem to directed, weighted graphs. The proof is based on a specific factorization of the 
Laplacian matrices associated to the graphs, which only involves the two incidence matrices that capture the graph's topology. We also point out how this result can be used to calculate principal eigenvectors of the Laplacian matrices.  
\end{abstract}



\section{Introduction}
Kirchoff's matrix tree theorem \cite{kirchoff} is a result that allows one to determine the number of spanning trees rooted at any vertex of an undirected graph by simply computing 
the determinant of an appropriate matrix associated to the graph. A recent elementary proof of Kirchoff's matrix tree theorem can be found in \cite{stanley}. Kirchoff's 
matrix theorem can be extended to directed graphs, see \cite{chaiken,tutte-book} for proofs. According to \cite{chaiken}, an early proof of this extension is due to \cite{borchardt}, although the result is often attributed to Tutte in \cite{tutte1948}, and hence referred to as Tutte's Theorem. The goal of this paper is to provide an elementary proof of Tutte's Theorem. 
Most proofs of Tutte's Theorem appear to be based on the Leibniz formula for the determinant of a matrix expressed as a sum over 
permutations of the matrix elements. The strategy proposed here is to instead factor the matrix as a product of two rectangular matrices, and use Binet-Cauchy's determinant formula instead.

Let $G=(V,E)$ be a {\it directed graph} with finite vertex set $V=\{v_1,\dots, v_p\}$ and finite directed edge set $E=\{e_1,\dots, e_q\}$. We assume that each directed edge points from some vertex $v_i$ 
to another vertex $v_j\neq v_i$, and that there exists at most one directed edge from any vertex to any distinct vertex. 
For each vertex $v_i$ of $G$, we define the {\it in-degree of $v_i$} as the number of distinct directed edges which point to $v_i$. Similarly, the {\it out-degree of $v_i$} is the number of 
distinct directed edges of $G$ which point from $v_i$ to some other vertex. A {\it directed  cycle} of $G$ is a collection of distinct vertices $\{v_{i_1},v_{i_2},\dots, v_{i_n}\}$, and a collection of distinct directed edges 
$\{e_{k_1},\dots,e_{k_n}\}$ such that each $e_{k_j}$ points from $v_{i_j}$ to $v_{i_{j+1}}$, and where $v_{i_{n+1}}=v_{i_1}$.

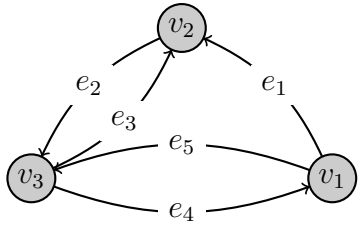
\begin{figure}
\centering
\begin{tikzpicture}[colorstyle/.style={circle, draw=black,fill=black!20,thick, inner sep=2pt, minimum size=1 mm, outer sep=0pt},
    scale=2]
    
\node (3) at (0,0)[colorstyle]{$v_3$};
\node (2) at (1,1)[colorstyle]{$v_2$};
\node (1) at (2,0)[colorstyle]{$v_1$};

\tikzset{EdgeStyle/.append style = {->,bend right = 20}}
 
\Edge[label = $e_1$](1)(2)
\Edge[label = $e_2$](2)(3)
\Edge[label = $e_3$](3)(2)
\Edge[label = $e_4$](3)(1)
\Edge[label = $e_5$](1)(3)

\end{tikzpicture}
\caption{An example of a directed graph $G$ with vertex set $V=\{v_1,v_2,v_3\}$ and directed edge set $E=\{e_1,e_2,e_3,e_4,e_5\}$.}\label{fig-running}
\end{figure}

{\bf Example}: Consider the directed graph in Figure $\ref{fig-running}$ which we shall use as a running example throughout this paper to illustrate the various concepts and notions. This directed graph has $p=3$ vertices and $q=5$ directed edges. The in-degrees of $v_1,v_2$ and $v_3$ are equal to $1,2$ and $2$ respectively. 
The out-degrees of $v_1,v_2$ and $v_3$ are equal to $2,1$ and $2$ respectively. 
There are several directed cycles, such as $\{v_2,v_3\}$ and $\{e_2,e_3\}$; $\{v_3,v_1\}$ and $\{e_4,e_5\}$; $\{v_1,v_2,v_3\}$ and $\{e_1,e_2,e_4\}$.

{\bf Definitions}: A {\it directed subgraph} of $G$ is a directed graph $G'=(V',E')$ with $V'\subseteq V$ and $E'\subseteq E$. 
Fix a vertex $v_r$ in $V$. We say that a directed subgraph $G'$ of $G$ is an  
{\it outgoing (incoming) directed spanning tree rooted at  $v_r$},  if $V'=V$, and if the following 3 conditions hold:
\begin{enumerate}
\item
Every vertex $v_i\neq v_r$ in $V'$ has in-degree (out-degree) 1.
\item
The root vertex $v_r$ has in-degree (out-degree) 0.
\item 
$G'$ has no directed cycles.
\end{enumerate}

Note that any outgoing (incoming) directed spanning tree of $G$ necessarily has $p-1$ distinct directed edges selected among the $q$ directed edges of $G$. Indeed, an outgoing (incoming) directed spanning tree must have exactly $p$ vertices. All its vertices except for the root $v_r$ must have in-degree (out-degree) equal to $1$, and the in-degree (out-degree) of the root $v_r$ must be $0$. Therefore, to identify outgoing (incoming) directed spanning trees, we should only consider directed subgraphs $G'$ of $G$ with the same number of vertices as $G$ (namely, $p$), and with exactly $p-1$ distinct directed edges chosen among the directed edges of $G$. There are a total of ${q\choose p-1}$  directed subgraphs $G'$ of $G$ with $p$ vertices and $p-1$ directed edges,  a possibly large number. Only some -and in some cases, none- of these 
directed subgraphs are outgoing (incoming) directed spanning trees, namely those which do not contain directed cycles. In order to count the number of outgoing (incoming) directed spanning trees at a given root, we need a counting scheme that recognizes the outgoing (incoming) directed spanning trees, and ignores directed subgraphs which contain directed cycles. The proof of Tutte's Theorem presented below provides such a counting scheme.

{\bf Example}: Consider the directed graph in Figure $\ref{fig-running}$, and choose as root $v_r=v_3$. There are $2$ outgoing (incoming) directed spanning trees rooted at $v_3$, which are depicted in Figure $\ref{trees-running-example1}$ (Figure $\ref{trees-running-example2}$). 

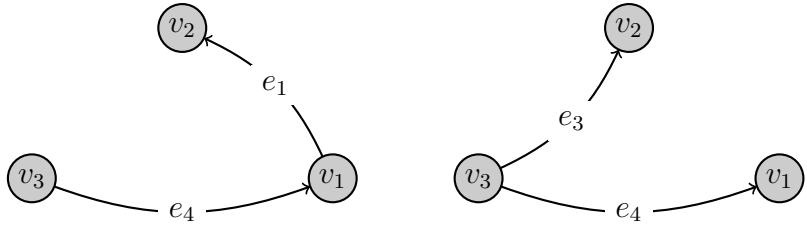
\begin{figure}
\centering
\begin{tikzpicture}[colorstyle/.style={circle, draw=black,fill=black!20,thick, inner sep=2pt, minimum size=1 mm, outer sep=0pt},scale=2]

\node (3) at (0,0)[colorstyle]{$v_3$};
\node (2) at (1,1)[colorstyle]{$v_2$};
\node (1) at (2,0)[colorstyle]{$v_1$};

\tikzset{EdgeStyle/.append style = {->,bend right = 20}}
 
\Edge[label = $e_1$](1)(2)
\Edge[label = $e_4$](3)(1)
\end{tikzpicture}
\hspace{1cm}
\begin{tikzpicture}[colorstyle/.style={circle, draw=black,fill=black!20,thick, inner sep=2pt, minimum size=1 mm, outer sep=0pt},scale=2]

\node (3) at (0,0)[colorstyle]{$v_3$};
\node (2) at (1,1)[colorstyle]{$v_2$};
\node (1) at (2,0)[colorstyle]{$v_1$};

\tikzset{EdgeStyle/.append style = {->,bend right = 20}}
 
\Edge[label = $e_3$](3)(2)
\Edge[label = $e_4$](3)(1)
\end{tikzpicture}
\caption{Outgoing directed spanning trees rooted at $v_r=v_3$ for the directed graph from Figure $\ref{fig-running}$.}
\label{trees-running-example1}
\end{figure}

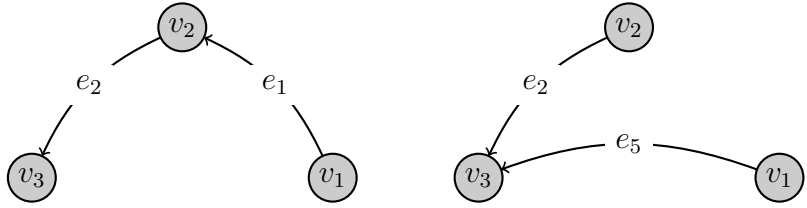
\begin{figure}
\centering
\begin{tikzpicture}[colorstyle/.style={circle, draw=black,fill=black!20,thick, inner sep=2pt, minimum size=1 mm, outer sep=0pt},scale=2]

\node (3) at (0,0)[colorstyle]{$v_3$};
\node (2) at (1,1)[colorstyle]{$v_2$};
\node (1) at (2,0)[colorstyle]{$v_1$};

\tikzset{EdgeStyle/.append style = {->,bend right = 20}}
 
\Edge[label = $e_1$](1)(2)
\Edge[label = $e_2$](2)(3)
\end{tikzpicture}
\hspace{1cm}
\begin{tikzpicture}[colorstyle/.style={circle, draw=black,fill=black!20,thick, inner sep=2pt, minimum size=1 mm, outer sep=0pt},scale=2]

\node (3) at (0,0)[colorstyle]{$v_3$};
\node (2) at (1,1)[colorstyle]{$v_2$};
\node (1) at (2,0)[colorstyle]{$v_1$};

\tikzset{EdgeStyle/.append style = {->,bend right = 20}}
 
\Edge[label = $e_2$](2)(3)
\Edge[label = $e_5$](1)(3)
\end{tikzpicture}
\caption{Incoming directed spanning trees rooted at $v_r=v_3$ for the directed graph from Figure $\ref{fig-running}$.}
\label{trees-running-example2}
\end{figure}

To a directed graph $G$ we associate two real $p\times p$ matrix, called the {\it Laplacians of $G$}, which are defined as follows:
\begin{equation}\label{laplacian}
L_1=D_{in}-A_v,\textrm{ and }L_2=D_{out}-A_v^T
\end{equation}
where:
\begin{itemize}
\item
 $D_{in}$ is a diagonal matrix defined as $[D_{in}]_{ii}=$ in-degree of vertex $v_i$, for all $i=1,\dots ,p$, 
\item
$A_v$ is the {\it vertex-adjacency matrix} of $G$, a real $p\times p$ matrix defined entry-wise as follows:
$$
[A_v]_{ij}=\begin{cases}1,\textrm{ if  there exists a  directed edge from } v_i \textrm{ to  }v_j\\
0,\textrm{ otherwise}
\end{cases}
$$
\item
 $D_{out}$ is a diagonal matrix defined as $[D_{out}]_{ii}=$ out-degree of vertex $v_i$, for all $i=1,\dots ,p$, 
\end{itemize}

Fix a vertex $v_r$ in $G$, and define the {\it reduced Laplacians $L_1^r$ and $L_2^r$}, by removing the $r$th row and $r$th columns from $L_1$ and $L_2$ respectively. Then 
Tutte's Theorem asserts that:
\begin{stel} \label{tutte} ({\bf Tutte's Theorem}) Let $G=(V,E)$ be a directed graph. The
number of outgoing and incoming directed spanning trees rooted at  $v_r$ are equal to $\deter(L_1^r)$ and $\deter(L_2^r)$ respectively.
\end{stel}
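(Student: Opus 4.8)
The plan is to prove the outgoing case (the incoming case follows by the same argument applied to the reversed graph, or equivalently by symmetry between $L_1$ and $L_2$). First I would set up the two incidence matrices that encode the graph's topology. Let $B$ be the $p\times q$ \emph{head incidence matrix} with $[B]_{ij}=1$ if edge $e_j$ points to $v_i$ and $0$ otherwise, and let $C$ be the $p\times q$ \emph{tail incidence matrix} with $[C]_{ij}=1$ if edge $e_j$ points from $v_i$ and $0$ otherwise. The key algebraic observation is the factorization
\begin{equation*}
L_1 = D_{in} - A_v = B(B-C)^T,
\end{equation*}
which one checks entrywise: the $(i,k)$ entry of $B(B-C)^T$ sums over edges $e_j$, contributing $1$ when $e_j$ heads into $v_i$ and $i=k$ (giving the in-degree on the diagonal), and $-1$ when $e_j$ heads into $v_i$ and tails out of $v_k$ (giving $-[A_v]_{ki}$, which matches since an edge from $v_k$ to $v_i$ contributes to $[A_v]_{ki}$). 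Deleting row $r$ from $B$ and from $(B-C)^T$ gives $L_1^r = \hat B (\widehat{B-C})^T$ where $\hat B$ is $B$ with row $r$ removed (a $(p-1)\times q$ matrix) and similarly for the other factor.

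Next I would apply the Cauchy–Binet formula to $\det(L_1^r)$. Since $L_1^r$ is a product of a $(p-1)\times q$ matrix and a $q\times(p-1)$ matrix,
\begin{equation*}
\det(L_1^r) = \sum_{S} \det(\hat B_S)\,\det\big((\widehat{B-C})^T_S\big),
\end{equation*}
where $S$ ranges over all $(p-1)$-element subsets of the edge set $\{1,\dots,q\}$, $\hat B_S$ is the square submatrix of $\hat B$ on the columns in $S$, and likewise for the other factor. Each such $S$ corresponds exactly to a choice of $p-1$ edges, i.e.\ a candidate directed subgraph $G'$ with $p$ vertices and $p-1$ edges. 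The heart of the proof is then the combinatorial lemma: for each such $S$,
\begin{equation*}
\det(\hat B_S)\cdot\det\big((\widehat{B-C})^T_S\big) = \det(\hat B_S)^2 \quad\text{or}\quad 0,
\end{equation*}
and this product equals $1$ precisely when the subgraph $G'$ determined by $S$ is an outgoing spanning tree rooted at $v_r$, and $0$ otherwise. Summing then yields that $\det(L_1^r)$ counts the outgoing spanning trees rooted at $v_r$.

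To establish the combinatorial lemma I would argue as follows. The matrix $\hat B_S$ has one column per edge in $S$ and one row per vertex other than $v_r$; each column has at most one nonzero entry (a single $1$, in the row of that edge's head, unless the head is $v_r$ in which case the column is zero). First, $\det(\hat B_S)\neq 0$ forces every column to be nonzero and all the heads to be distinct, so the $p-1$ edges in $S$ have $p-1$ distinct heads, none equal to $v_r$; since there are exactly $p-1$ vertices other than $v_r$, this says every non-root vertex has in-degree exactly $1$ in $G'$ and $v_r$ has in-degree $0$ — conditions (1) and (2). In that case $\hat B_S$ is a permutation matrix, so $\det(\hat B_S)=\pm1$ and $\det(\hat B_S)^2=1$. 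Second, I must show that under conditions (1)–(2), $\det((\widehat{B-C})^T_S) = \det(\hat B_S)$ iff $G'$ is acyclic, and is $0$ (more precisely, differs so the product vanishes) iff $G'$ contains a cycle. For this, expand $\det((\widehat{B-C})_S)$: the matrix $\widehat{B-C}$ restricted to the edges of $S$ has columns with entries $+1$ at the head row, $-1$ at the tail row (each suppressed if that vertex is $v_r$). When $G'$ has a directed cycle, the columns corresponding to the cycle edges sum to zero (the $+1$'s and $-1$'s telescope around the cycle, and the root is not on the cycle since all cycle vertices have in-degree $1$ within the cycle already, using condition (1) to rule out $v_r$), forcing the determinant to be $0$. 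When $G'$ is acyclic, conditions (1)–(2) make $G'$ a genuine in-tree toward nowhere — rather, an out-tree rooted at $v_r$ — and one shows the determinant equals $\det(\hat B_S)$ by ordering the vertices so that each non-root vertex appears after its unique parent: in that order $(\widehat{B-C})^T_S$ becomes triangular with the same diagonal as $\hat B_S$ up to the column permutation, so the two determinants agree. Hence each surviving term contributes exactly $1$.

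The main obstacle I anticipate is the last step: carefully showing that for an acyclic $G'$ satisfying (1)–(2) the two determinants $\det(\hat B_S)$ and $\det((\widehat{B-C})^T_S)$ are \emph{equal} (not merely equal up to sign), and that the cycle case genuinely kills the term. Both reduce to a linear-algebra bookkeeping argument about the $-C$ part of $B-C$: one wants to argue that the tail entries can be cleared by column operations that add the column of a vertex's parent edge, without changing the determinant, which terminates precisely because $G'$ is acyclic; and conversely a cycle produces a nontrivial linear dependence among columns. Making this induction airtight — identifying a leaf (a vertex that is a tail of no edge in $S$, which must exist when $G'$ is acyclic) and peeling it off — is the crux, but it is elementary and avoids the Leibniz permutation expansion entirely, which is the point of the paper.
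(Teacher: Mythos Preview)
Your strategy is the paper's: factor $L_1^r$ through the two incidence matrices, apply Cauchy--Binet, and argue that each term is $1$ when $S$ is the edge set of an outgoing spanning tree rooted at $v_r$ and $0$ otherwise, with the non-tree cases handled exactly as you describe (zero or repeated columns in $\hat B_S$ when conditions (1)--(2) fail; a column dependence in $\widehat{B-C}_S$ when there is a cycle). One slip: $B(B-C)^T$ is actually $D_{in}-A_v^T=L_1^T$, not $L_1$ --- your own entry computation produces $-[A_v]_{ki}$ in position $(i,k)$, whereas $[L_1]_{ik}=-[A_v]_{ik}$. In your notation the paper's factorization is $L_1=(B-C)B^T$. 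This is harmless for the determinant.

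The one substantive difference is the tree case. Rather than triangularize via a topological ordering or peel leaves, the paper multiplies the two square factors together: since $\hat B_S$ is a permutation matrix, the product of determinants equals $\det(I-D)$ where $D$ is the adjacency matrix of the tree restricted to $V\setminus\{v_r\}$. Acyclicity then gives $D^{p-1}=0$ directly by pigeonhole (a walk of length $p-1$ among the $p-1$ non-root vertices must revisit a vertex and close a cycle), so $D$ is nilpotent and $\det(I-D)=1$. This disposes of the sign-matching concern you flagged as the main obstacle in a single stroke, and is arguably cleaner than the inductive leaf-removal you sketch.
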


{\bf Example}:  For the directed graph from Figure $\ref{fig-running}$, and picking the root $v_r=v_3$, we have that:
$$
D_{in}=\begin{pmatrix}1&0&0\\ 0&2&0 \\0&0&2 \end{pmatrix},\textrm{ and } A_v=\begin{pmatrix}0&1&1\\0&0&1\\ 1&1&0 \end{pmatrix},
$$
and thus
$$
L_1=\begin{pmatrix}
1&-1&-1\\
0&2&-1\\
-1&-1&2
\end{pmatrix},\textrm{ and } L_1^r=\begin{pmatrix}1&-1\\0&2 \end{pmatrix}.
$$
Then $\deter(L_1^r)=2$ is indeed equal to the number of outgoing directed spanning trees rooted at $v_r=v_3$, confirming Tutte's Theorem for this example. 
Similarly, 
$$
D_{out}=\begin{pmatrix}2&0&0\\ 0&1&0 \\0&0&2 \end{pmatrix},
$$
and thus
$$
L_2=\begin{pmatrix}2&0&-1\\ -1&1&-1 \\-1&-1&2 \end{pmatrix},\textrm{ and } L_2^r=\begin{pmatrix}2&0\\-1&1 \end{pmatrix}.
$$
Then $\deter(L_2^r)=2$ is also indeed equal to the number of incoming directed spanning trees rooted at $v_r=v_3$, once again confirming Tutte's Theorem for this example.

\section{Proof of Tutte's Theorem}
Although Tutte's Theorem is a remarkable result, it is expressed in terms of  rather complicated matrices associated to a directed graph, namely the reduced Laplacians.
To a directed graph, one can associate two much more elementary matrices, known as {\it incidence matrices},  which arise quite naturally. 
For a given vertex, one can record the directed edges pointing {\it to} this vertex. This information will be captured by one of the incidence matrices, namely by $N_{in}$. 
Similarly, one can record for each vertex, the directed edges pointing {\it from} this vertex, and this will be captured by the second incidence matrix $M_{out}$.



{\bf Definitions}: Let $G=(V,E)$ be a directed graph.
The {\it incidence matrix $N_{in}$} is a real $q\times p$ matrix defined entry-wise as follows:
$$
[N_{in}]_{ki}=\begin{cases}
1,\textrm{ if directed edge } e_k \textrm{ points {\it to} vertex } v_i, \hspace{1cm}
\begin{tikzpicture}[colorstyle/.style={circle, draw=black,fill=black!20,thick, inner sep=2pt, minimum size=1 mm, outer sep=0pt},
    scale=2]    
\node (1) at (0,0)[colorstyle]{};
\node (2) at (1,0)[colorstyle]{$v_i$};
\tikzset{EdgeStyle/.append style = {->,bend right = 0}}
\Edge[label = $e_k$](1)(2)
\end{tikzpicture}
\\
0,\textrm{ otherwise}
\end{cases}
$$
One can identify the $k$th row of $N_{in}$ with edge $e_k$. This row has exactly one non-zero entry which equals $1$ and is located in the $i$th column, where $v_i$ is the vertex 
to which edge $e_k$ points. 

Similarly, the i{\it ncidence matrix $M_{out}$} is a real $p\times q$ matrix defined entry-wise as:
$$
[M_{out}]_{ik}=\begin{cases}
1,\textrm{ if directed edge } e_k \textrm{ points {\it from} vertex } v_i, \hspace{1cm}
\begin{tikzpicture}[colorstyle/.style={circle, draw=black,fill=black!20,thick, inner sep=2pt, minimum size=1 mm, outer sep=0pt},
    scale=2]    
\node (1) at (0,0)[colorstyle]{$v_i$};
\node (2) at (1,0)[colorstyle]{};
\tikzset{EdgeStyle/.append style = {->,bend right = 0}}
\Edge[label = $e_k$](1)(2)
\end{tikzpicture}
\\
0,\textrm{ otherwise}
\end{cases}
$$
One can identify the $k$th column of $M_{out}$ with edge $e_k$. This column has exactly one non-zero entry which equals $1$ and is located in the $i$th row, where $v_i$ is the vertex  from which $e_k$ points. 

{\bf Example}:  For the directed graph from Figure $\ref{fig-running}$, 
$$
N_{in}=\begin{pmatrix}
0&1&0\\
0&0&1\\
0&1&0\\
1&0&0\\
0&0&1
\end{pmatrix},\textrm{ and }
M_{out}=\begin{pmatrix}
1&0&0&0&1\\
0&1&0&0&0\\
0&0&1&1&0
\end{pmatrix}.
$$
The two incidence matrices contain purely {\it local} information concerning a directed graph, namely they record which directed edges point to, respectively from each vertex. 
But on the other hand, both matrices also provide us with {\it global} information about the graph. Indeed, given these two matrices, we can unambiguously construct the graph. 
This suggests that perhaps the two Laplacians of a directed graph can be expressed in terms of just  the two incidence matrices. The following factorization result shows that this is indeed the case.
\begin{lemma}\label{factors} Let $G=(V,E)$ be a directed graph. Then 
\begin{equation}\label{DenA}
D_{in}=N_{in}^TN_{in},\; A_v=M_{out}N_{in}  \textrm{ and } D_{out}=M_{out}M^T_{out},
\end{equation}
and thus
\begin{equation}\label{factorization}
L_1=(N_{in}^T-M_{out})N_{in},\textrm{ and }L_2=(M_{out}-N_{in}^T)M^T_{out}.
\end{equation}
Fix a vertex $v_r$ in $V$. Let $N_{in}^r$  be the matrix obtained from $N_{in}$ by removing the $r$th column in $N_{in}$, and let $M_{out}^r$ be the matrix obtained from $M_{out}$ be removing the $r$th row from $M_{out}$. Then
\begin{equation}\label{factorization-reduced}
L_1^r=((N^r_{in})^T-M_{out}^r)N_{in}^r,\textrm{ and }L_2^r=(M_{out}^r-(N^r_{in})^T)(M^r_{out})^T
\end{equation}
\end{lemma}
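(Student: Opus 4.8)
The plan is to establish the three matrix identities in (\ref{DenA}) by direct entrywise computation, and then obtain everything else by elementary algebra. The key structural facts I would exploit are the ones already recorded when the incidence matrices were introduced: each row of $N_{in}$ (the one indexed by an edge $e_k$) has exactly one nonzero entry, a $1$ in the column of the head of $e_k$, and dually each column of $M_{out}$ (the one indexed by $e_k$) has exactly one nonzero entry, a $1$ in the row of the tail of $e_k$.

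For $N_{in}^T N_{in}$, the $(i,j)$ entry equals $\sum_{k=1}^q [N_{in}]_{ki}[N_{in}]_{kj}$, which counts the edges $e_k$ whose head is simultaneously $v_i$ and $v_j$; since an edge has a unique head, this sum equals the in-degree of $v_i$ when $i=j$ and $0$ otherwise, so $N_{in}^T N_{in}=D_{in}$. For $M_{out}N_{in}$, the $(i,j)$ entry $\sum_k [M_{out}]_{ik}[N_{in}]_{kj}$ counts the edges with tail $v_i$ and head $v_j$; by the standing assumption that there is at most one such edge, this entry is exactly $[A_v]_{ij}$, so $M_{out}N_{in}=A_v$. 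For $M_{out}M_{out}^T$, the $(i,j)$ entry $\sum_k [M_{out}]_{ik}[M_{out}]_{jk}$ counts edges with tail simultaneously $v_i$ and $v_j$, which gives the out-degree on the diagonal and $0$ off it, so $M_{out}M_{out}^T=D_{out}$. Given (\ref{DenA}), (\ref{factorization}) is then pure algebra: $L_1=D_{in}-A_v=N_{in}^TN_{in}-M_{out}N_{in}=(N_{in}^T-M_{out})N_{in}$, and $L_2=D_{out}-A_v^T=M_{out}M_{out}^T-(M_{out}N_{in})^T=M_{out}M_{out}^T-N_{in}^TM_{out}^T=(M_{out}-N_{in}^T)M_{out}^T$.

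For the reduced identities (\ref{factorization-reduced}) I would use the following observation about any factorization $L=BN$ with $B$ of size $p\times q$ and $N$ of size $q\times p$: for $i,j\neq r$ the entry $[L]_{ij}=\sum_{k=1}^q B_{ik}N_{kj}$ depends on $B$ only through its rows other than $r$ and on $N$ only through its columns other than $r$, so deleting the $r$th row and $r$th column of $L$ produces exactly the product of ($B$ with its $r$th row deleted) and ($N$ with its $r$th column deleted). Applying this to $L_1=(N_{in}^T-M_{out})N_{in}$: deleting the $r$th row of $N_{in}^T-M_{out}$ gives $(N_{in}^r)^T-M_{out}^r$ (row deletion distributes over the difference, and deleting row $r$ of $N_{in}^T$ is the transpose of deleting column $r$ of $N_{in}$), while deleting the $r$th column of $N_{in}$ gives $N_{in}^r$; hence $L_1^r=((N_{in}^r)^T-M_{out}^r)N_{in}^r$. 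The identity for $L_2^r$ follows the same way from $L_2=(M_{out}-N_{in}^T)M_{out}^T$, using that deleting the $r$th column of $M_{out}^T$ is the transpose of deleting the $r$th row of $M_{out}$, i.e.\ equals $(M_{out}^r)^T$.

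I do not expect a genuine obstacle here; the only point needing a little care is the index bookkeeping after deletion — in particular verifying that the inner summation index $k$ runs over all $q$ edges both before and after the reduction, so that the reduced product really does reproduce the reduced Laplacian entry by entry.
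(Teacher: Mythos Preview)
Your proposal is correct and follows essentially the same approach as the paper: entrywise verification of the three identities in (\ref{DenA}), then straightforward algebra for (\ref{factorization}), and finally the observation that deleting row $r$ and column $r$ from a product $BN$ amounts to deleting row $r$ from $B$ and column $r$ from $N$. The only cosmetic difference is that the paper derives the reduced factorizations by first writing $L_1^r=D_{in}^r-A_v^r$ and reducing each factor separately, whereas you state the row/column deletion fact once for a general product and apply it directly to the already-factored $L_1$ and $L_2$.
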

\begin{proof}
For all $i,j=1,2,\dots, p$, consider
\begin{eqnarray*}
[N^T_{in}N_{in}]_{ij}&=&\sum_{k=1}^q[N^T_{in}]_{ik}[N_{in}]_{kj}=\sum_{k=1}^q [N_{in}]_{ki}[N_{in}]_{kj}\\
&=&\begin{cases}0,\textrm{ if } i\neq j\\
\textrm{in-degree of vertex } v_i,\textrm{ if } i=j
\end{cases}
\end{eqnarray*}
Indeed, for each $k$, $[N_{in}]_{ki}[N_{in}]_{kj}=0$ when $i\neq j$ since each edge $e_k$ points to exactly one vertex, so at most one of the two factors in this product can be non-zero. 
On the other hand, when $i=j$, then $[N_{in}]_{ki}[N_{in}]_{kj}=\left([N_{in}]_{ki}\right)^2=1$ when $e_k$ points to $v_i$, but equals $0$ if $e_k$ does not point to $v_i$. The sum over 
all $q$ of these terms therefore yields the in-degree of vertex $v_i$. This establishes that $D_{in}=N^T_{in}N_{in}$. A similar proof shows that $D_{out}=M_{out}M^T_{out}$.

For all $i,j=1,2,\dots, p$, consider
\begin{eqnarray*}
[M_{out}N_{in}]_{ij}&=&\sum_{k=1}^q[M_{out}]_{ik}[N_{in}]_{kj}\\
&=&\begin{cases}0,\textrm{ if there is no edge pointing from } v_i \textrm{ to } v_j \\
1,\textrm{ if there is an edge pointing from } v_i\textrm{ to } v_j
\end{cases}
\end{eqnarray*}
Indeed, $[M_{out}]_{ik}[N_{in}]_{kj}=1$ precisely when edge $e_k$ points from vertex $v_i$ to vertex $v_j$, and equals zero otherwise. The sum over all 
$q$ of these terms can not be larger than $1$ because there is at most one distinct directed edge pointing from one vertex to another vertex. This establishes that $A_v=M_{out}N_{in}$. 

The definitions of $L_1$ and $L_2$, together with $(\ref{DenA})$, imply $(\ref{factorization})$. Finally, it is immediate from the definition of $L_1$ ($L_2$) that $L_1^r=D_{in}^r-A_v^r$ 
($L_2^r=D_{out}^r-(A_v^T)^r$), where 
$D_{in}^r$ ($D_{out}^r$) and $A_v^r$ ($(A_v^T)^r$) are obtained from respectively $D_{in}$ ($D_{out}$) and $A_v$ ($A_v^T$) by deleting the $r$th row and $r$th column from these matrices. Moreover, $(\ref{DenA})$ implies that 
\begin{eqnarray*}
D_{in}^r=(N^r_{in})^TN^r_{in}&\textrm{ and }& A_v^r=M^r_{out}N^r_{in},\textrm{ and}\\
D_{out}^r=(M^r_{out})(M^r_{out})^T&\textrm{ and }& (A_v^T)^r=(N^r_{in})^T(M^r_{out})^T,
\end{eqnarray*}
which in turn yields $(\ref{factorization-reduced})$.
\end{proof}

We are now ready for a\\[2ex] 
{\bf Proof of Theorem $\ref{tutte}$}: We shall only provide a proof for the reduced Laplacian $L_1^r$ because the proof for the reduced Laplacian $L_2^r$ is analogous. Lemma $\ref{factors}$ implies that:
$$
\deter(L_1^r)=\deter \left( ((N^r_{in})^T-M_{out}^r)N_{in}^r\right).
$$

For notational convenience we set
$$
B=(N^r_{in})^T-M_{out}^r,\textrm{ and }C=N_{in}^r.
$$
Then Binet-Cauchy's determinant formula implies that:
$$
\deter(L_1^r)=\sum_{S\subseteq \{1,\dots, q\}, |S|=p-1} \deter(B[S])\deter(C[S]),
$$
where the sum is over all subsets $S$ of $\{1,\dots, q\}$ containing $p-1$ elements. There are ${q \choose p-1}$ such subsets. Furthermore, $B[S]$ denotes the $(p-1)\times(p-1)$ submatrix obtained from the 
$(p-1)\times q$ matrix $B$ by selecting precisely those columns of $B$ in the set $S$. Similarly, $C[S]$ is the submatrix obtained from the $q\times (p-1)$ matrix $C$, by selecting precisely those 
rows of $C$ in the set $S$.

To complete the proof of Tutte's Theorem, we will show that:
\begin{enumerate}
\item When the $p-1$ elements in $S$ correspond to the indices of the directed edges of an outgoing directed spanning tree rooted at $v_r$, then
$$
\deter(B[S])\deter(C[S])=1.
$$
\item
When the $p-1$ elements in $S$ correspond to the indices of the directed edges of a directed subgraph of $G$ which is not an outgoing directed spanning tree rooted at $v_r$, then
$$
\deter(B[S])\deter(C[S])=0.
$$
\end{enumerate}

\noindent
1. Suppose that $S=\{k_1,k_2,\dots, k_{p-1}\}$, with $k_1<k_2<\dots, k_{p-1}$, is in a bijective correspondence to a set of $p-1$ indices of the directed edges $e_{k_1},e_{k_2},\dots , e_{k_{p-1}}$ of an outgoing directed spanning tree rooted at $v_r$.  Similarly, let ${\tilde S}=\{l_1,l_2,\dots, l_{p-1}\}$, with $l_1<l_2<\dots < l_{p-1}$ be in a bijective correspondence 
to the set of indices of the vertices in $V\setminus\{v_r\}$. 
Set $T=(V,E')$ to denote the directed subgraph of $G$ corresponding to this tree, i.e. 
$E'= \{ e_{k_1},e_{k_2},\dots , e_{k_{p-1}}\}$.

{\bf Claim}: The $(p-1)\times (p-1)$ matrix $C[S]=N^r_{in}[S]$ has precisely one non-zero entry in each row and in each column, and this non-zero entry equals $1$.
This follows from the fact that for an outgoing  
directed spanning tree rooted at $v_r$, each of the vertices distinct from $v_r$ has in-degree equal to $1$ (implying that each column of $C[S]=N^r_{in}[S]$ contains exactly one non-zero entry that equals $1$), and each of the $p-1$ directed edges points to one of the $p-1$ non-root vertices (implying that each row of $C[S]=N^r_{in}[S]$ contains exactly one non-zero entry that equals $1$). 

Consequently, the matrix $C[S]=N^r_{in}[S]$ is a permutation matrix, i.e. it is a matrix obtained from the identity matrix by finitely many column swaps, and thus $C[S](C[S])^T=I=(C[S])^TC[S]$, whence:
\begin{eqnarray}
\deter(B[S])\deter(C[S])&=&\deter(B[S]C[S])\nonumber \\
&=&\deter \left(I- M^r_{out}[S]N^r_{in}[S]\right) \nonumber \\
&=&\deter(I-D) \label{jordan},
\end{eqnarray}
where 
$$
D=M^r_{out}[S]N^r_{in}[S].
$$
Recall from Lemma $\ref{factors}$ that $A_v=M_{out}N_{in}$. By reducing $N_{in}$ to $N_{in}^r$, and $M_{out}$ to $M_{out}^r$ as in Lemma $\ref{factors}$, and then selecting the respective sub-matrices $N_{in}^r[S]$ and $M^r_{out}[S]$, we obtain that:
$$
D_{ij}=\begin{cases}
1,\textrm{ if there is a directed edge in }E' \textrm{ pointing from } v_{l_i} \textrm{ to } v_{l_j}, \textrm{ both in } V\setminus \{v_r\}\\
0,\textrm{ otherwise }
\end{cases}
$$

{\bf Claim}: $D$ is nilpotent, and hence there is an invertible $(p-1)\times (p-1)$ matrix $S$ such that 
$S^{-1}DS=J$, where $J$, the Jordan canonical form of $D$, is strictly upper-triangular (all diagonal entries of $J$ are zero), and then $(\ref{jordan})$ 
implies that:
$$
\deter(B[S])\deter(C[S])=\deter \left(I-D \right)=\deter \left(S^{-1}(I-D)S \right)=\deter \left(I-J \right)=+1.
$$
To show that $D$ is nilpotent, we will prove that $D^{p-1}=0$. Arguing by contradiction, assume that $[D^{p-1}]_{i_1 i_p}\neq 0$ for some $i_1$ and $i_p$ in $\{1,\dots, p-1\}$. 
Then there exist $v_{l_{i_1}},v_{l_{i_2}},\dots, v_{l_{i_p}}$ in $V\setminus \{v_r\}$, such that for all $s=1,\dots, p-1$,  there is some directed edge in $E'$ from 
$v_{l_{i_s}}$ to $v_{l_{i_{s+1}}}$. Since $V\setminus \{v_r\}$ contains $p-1$ distinct elements, it follows from the Pigeonhole Principle that  
the sequence $v_{l_{i_1}},v_{l_{i_2}},\dots, v_{l_{i_p}}$ 
must contain two identical terms. But then $T$ contains a directed cycle, which is a contradiction.\\

\noindent
2. Suppose that $S=\{k_1,\dots, k_{p-1}\}$ with $k_1<\dots < k_{p-1}$ corresponds to the index set of a subgraph $G'=(V,E')$ of $G$ with $
E'=\{e_{k_1},\dots, e_{k_{p-1}}\}$, which  is not an outgoing directed spanning tree rooted at $v_r$. Our goal is to show that:
$$
\deter(B[S])\deter(C[S])=0.
$$
There are 3 possible cases to consider:\\
\noindent
{\bf Case 1}: There is a vertex $v_i$ in $G'$ with $v_i\neq v_r$, whose in-degree  is not $1$.\\
Then either the in-degree of $v_i$ is $0$, or it is at least $2$. 
If the in-degree of $v_i$ is $0$, then the column of $N_{in}^r[S]$ that records all in-coming edges to $v_i$ in $G'$, is a zero column vector, and thus $\deter(C[S])=\deter(N^r_{in}[S])=0$.
If the in-degree of $v_i$ is at least $2$, then there are at least two identical rows in the matrix $N_{in}^r[S]$, and likewise $\deter(C[S])=\deter(N^r_{in}[S])=0$.\\[2ex]
\noindent
{\bf Case 2}: The in-degree of the root $v_r$ is not $0$.\\
Then $N^r_{in}[S]$ has at least one zero row, and thus $\deter(C[S])=\deter(N^r_{in}[S])=0$.\\[2ex]
\noindent
{\bf Case 3}: $G'$ contains a directed cycle.\\
Let $\{v_{i_1},v_{i_2},\dots, v_{i_n}\}$ be a collection of distinct vertices in $V'=V$, and let 
$\{e_{l_1},\dots,e_{l_n}\}$  be a collection of distinct directed edges 
in $E'$ such that each $e_{l_j}$ points from $v_{i_j}$ to $v_{i_{j+1}}$, and where $v_{i_{n+1}}=v_{i_1}$. 
We claim that the sum of the $l_1$th, $l_2$th, $\dots$, $l_n$th columns of $B[S]=(N^r_{in})^T[S]-M^r_{out}[S]$ equals the zero vector, and thus 
$\deter(B[S])=\deter \left(N^r_{in})^T[S]-M^r_{out}[S]\right)=0$. To see why, note that each of the mentioned columns has exactly two non-zero entries, one being $+1$, and the other being $-1$. Moreover, the $l_j$th and the $l_{j+1}$th columns will have two non-zero entries of opposite sign in the same position. Therefore, by adding the columns, each $+1$ in some column is canceled by a $-1$ in another column, establishing the claim.

\section{Spanning trees and eigenvectors}
To a directed graph $G=(V,E)$, we have associated two Laplacians $L_1$ and $L_2$, see $(\ref{laplacian})$. The sum over all rows in both Laplacians is the zero vector; equivalently, all  column sums in both Laplacians are equal to zero. This follows from $(\ref{factorization})$ because
$$
(1,\, 1,\, \dots, 1) \left(N_{in}^T - M_{out} \right)=(1,\, 1,\, \dots, 1) \left(M_{out} - N^T_{in} \right)=0,
$$
since each column of the matrix $N_{in}^T - M_{out}$ contains exactly two non-zero entries, one being a $+1$ and the other a $-1$. Thus, zero is an eigenvalue of 
both $L_1$ and $L_2$. We will find eigenvectors associated to the zero eigenvalues of $L_1$ and $L_2$, and relate them to the number of outgoing, respectively incoming directed spanning trees rooted at each of the vertices of $G$.

\begin{stel}\label{eigen}
Let $G=(V,E)$ be a directed graph. Suppose that $x$ and $y$ are $p$-vectors, defined entry-wise as follows:
\begin{eqnarray}
x_i&=&\textrm{ number of outgoing directed spanning trees rooted at vertex } v_i, \textrm{ and }\\
y_i&=&\textrm{ number of incoming directed spanning trees rooted at vertex } v_i,
\end{eqnarray}
for all $i=1,\dots, p$. Then
\begin{equation}\label{e-vectors}
L_1x=0=L_2y.
\end{equation}
\end{stel}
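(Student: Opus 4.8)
The plan is to prove $L_1 x = 0$ componentwise, i.e.\ to show that for each row index $j$ the $j$th entry of $L_1 x$ vanishes; the identity $L_2 y = 0$ then follows by the same argument applied to the reverse graph (the graph obtained by reversing every directed edge), since $L_2$ of $G$ equals $L_1$ of the reverse graph and incoming spanning trees of $G$ are exactly outgoing spanning trees of the reverse graph. Using the definition $L_1 = D_{in} - A_v$, the $j$th component of $L_1 x$ is
\begin{equation*}
(L_1 x)_j = (\text{in-degree of } v_j)\, x_j - \sum_{i : (v_i \to v_j) \in E} x_i,
\end{equation*}
so the goal reduces to the combinatorial identity
\begin{equation*}
(\text{in-degree of } v_j)\, x_j = \sum_{i : (v_i \to v_j) \in E} x_i \qquad \text{for every } v_j \in V.
\end{equation*}

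To prove this identity I would set up a bijection between two sets of ``rooted-tree-plus-extra-edge'' configurations. On the left-hand side, consider pairs $(T, e)$ where $T$ is an outgoing spanning tree rooted at $v_j$ and $e = (v_i \to v_j)$ is any edge of $G$ pointing into $v_j$; there are exactly $(\text{in-degree of } v_j)\, x_j$ such pairs. On the right-hand side, consider pairs $(i, T')$ where $v_i$ is a vertex with an edge $v_i \to v_j$ in $G$ and $T'$ is an outgoing spanning tree rooted at $v_i$; there are $\sum_{i} x_i$ of these. Given $(T, e)$ with $e = (v_i \to v_j)$, adjoin the edge $e$ to $T$: in $T$ the root $v_j$ has in-degree $0$ and every other vertex in-degree $1$, so $T \cup \{e\}$ has $v_j$ with in-degree $1$, $v_i$ still with in-degree $1$, and contains exactly one directed cycle through $v_j$ (following the unique in-path from any vertex back toward $v_j$ now closes up via $e$). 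Deleting the unique edge of that cycle that points into $v_i$ yields a subgraph in which $v_i$ has in-degree $0$, all other vertices in-degree $1$, and no cycle remains — i.e.\ an outgoing spanning tree $T'$ rooted at $v_i$. This defines a map $(T,e) \mapsto (i, T')$.

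For the inverse, start from $(i, T')$ with $v_i \to v_j \in G$ and $T'$ an outgoing spanning tree rooted at $v_i$. Adjoin the edge $v_i \to v_j$; now $v_i$ has in-degree $1$, $v_j$ has in-degree $1$ (it had $1$ in $T'$ since $v_j \ne v_i$... except when $v_j = v_i$, which is excluded as $G$ has no loops), every vertex has in-degree $1$, so this configuration has exactly one directed cycle, and that cycle passes through the new edge $v_i \to v_j$ hence through $v_j$. Remove the edge of that cycle pointing into $v_j$; this restores $v_j$ to in-degree $0$ and kills the cycle, producing an outgoing spanning tree $T$ rooted at $v_j$, together with the retained edge $e = v_i \to v_j$ pointing into $v_j$. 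One checks the two constructions are mutually inverse, establishing the bijection and hence the identity, completing the proof.

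\textbf{Main obstacle.} The delicate point is the structural claim that adding one edge pointing into the root of an outgoing spanning tree produces a graph with \emph{exactly one} directed cycle, and that this cycle is uniquely determined and passes through the added edge; making this rigorous requires a clean lemma: in a directed graph on $p$ vertices where every vertex has in-degree exactly $1$, following in-edges backward from any vertex eventually cycles, and the $p$ edges decompose the graph into functional-graph components each containing exactly one cycle — here connectivity forces a single cycle. I would isolate this as a short preliminary observation, then the bijection and the identity $(L_1 x)_j = 0$ follow routinely, and the $L_2 y = 0$ case is obtained for free by the edge-reversal symmetry noted above.
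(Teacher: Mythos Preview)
Your argument has a genuine error at the very first step: you write $(L_1 x)_j = (\textrm{in-degree of }v_j)\,x_j - \sum_{i:(v_i\to v_j)\in E} x_i$, but by the paper's convention $[A_v]_{jk}=1$ precisely when there is an edge $v_j\to v_k$, so in fact $(A_v x)_j=\sum_{k:(v_j\to v_k)\in E}x_k$ is a sum over the \emph{out}-neighbours of $v_j$, not the in-neighbours. In the paper's running example with $j=1$ one computes $x_1=3$, $x_2=1$, $x_3=2$; your formula gives $1\cdot 3-x_3=1\neq 0$, whereas the correct one gives $3-(x_2+x_3)=0$. The error propagates into the bijection, which is set up to prove the wrong identity and indeed fails: with $j=3$, both $(\{e_1,e_4\},e_2)$ and $(\{e_3,e_4\},e_2)$ are sent by your rule to the same pair $(2,\{e_2,e_4\})$, so the map is not injective. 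Your proposed inverse is also confused (adjoining the out-edge $v_i\to v_j$ to $T'$ does not raise the in-degree of $v_i$ from $0$ to $1$ as you claim), and it breaks down whenever $v_i\to v_j$ already lies in $T'$ --- which in fact always happens for pairs in the image of your forward map.

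The bijective strategy can be repaired once the identity is corrected to $(\textrm{in-degree of }v_j)\,x_j=\sum_{k:(v_j\to v_k)\in E}x_k$: from $(T,e)$ form $T\cup\{e\}$ as you do, but delete the unique cycle edge \emph{leaving} $v_j$ (say $v_j\to v_k$) rather than the one entering $v_i$; this yields an outgoing spanning tree $T'$ rooted at $v_k$, and now the inverse is well-defined because $v_j\to v_k$ is guaranteed to be absent from $T'$ (since $v_k$ has in-degree $0$ there). For comparison, the paper's own proof takes an entirely different, linear-algebraic route: it expands $\det L=0$ along each row, uses the vanishing column sums of $L$ to show that the cofactors $C_{ij}$ depend only on the column index $j$, and then identifies $C_{jj}$ with $y_j$ via Tutte's Theorem.
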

\begin{proof}
We only give a proof for $L_2$ as it is similar for $L_1$. For notational convenience, we drop the subscript of $L_2$, and denote this matrix by $L$.
Since zero is an eigenvalue $L$, we have that $\deter(L)=0$. By expanding the determinant of $L$ along each of the rows of $L$, we see that:
\begin{eqnarray*}
L_{11}C_{11}+L_{12}C_{12}+\dots + L_{1p}C_{1p}&=&0\\
L_{21}C_{21}+L_{22}C_{22}+\dots +L_{2p}C_{2p}&=&0\\
&\vdots& \\
L_{p1}C_{p1}+L_{p2}C_{p2}+\dots +L_{pp}C_{pp}&=& 0,
\end{eqnarray*}
where $C_{ij}$ denotes the cofactor of $L_{ij}$.

We claim that for all $i,j,k$ in $\{1,\dots, p\}$:
$$
C_{ij}=C_{kj}.
$$
That is, the cofactors of elements of $L$ in the same column, are all equal. This is a standard exercise in linear algebra that relies on basic properties of determinants, and exploits the fact that all the column sums of $L$ equal zero, as remarked earlier. From Tutte's Theorem (Theorem $\ref{tutte}$)  follows that for all $i=1,\dots ,p$:
 $$
 C_{ii}=\deter(L^i)=y_i=\textrm{number of incoming directed spanning trees rooted at }v_i. 
$$
This implies that $Ly=0$. 
\end{proof}
{\bf Remark} The vectors $x$ and $y$ in Theorem $\ref{eigen}$ are only eigenvectors of $L_1$ and $L_2$, when they are non-zero vectors. This requires that 
$G$ should have at least one vertex such that there is an outgoing (or imcoming) directed spanning tree rooted at that vertex. A sufficient condition for this to happen is that $G$ is a {\it strongly connected}  directed graph. This means that from every vertex of $G$ there must exist a directed path to any other vertex of $G$. When $G$ is strongly connected, there are a positive number of incoming and outgoing directed spanning trees rooted at every vertex of $G$. Hence the vectors $x$ and $y$ are entry-wise positive vectors. This result also follows directly from the celebrated Perron-Frobenius Theorem applied to the irreducible Laplacian matrices $-L_1$ and $-L_2$. Note that these matrices have non-negative off-diagonal entries, and we already know that both have an eigenvalue at zero. This eigenvalue is a {\it principal eigenvalue}, meaning that every other eigenvalue has negative real part. 
The Perron-Frobenius Theorem then implies that both matrices have unique (up to multiplication by non-zero scalars), entry-wise positive eigenvectors associated to their zero eigenvalue. Theorem $\ref{eigen}$ above provides  a way to compute these eigenvectors. Indeed, in principle they can be found by simply counting the number of outgoing and incoming directed spanning trees rooted at every vertex of $G$. In other words, we have established a purely graphical procedure to compute eigenvectors of the zero principal eigenvalues of the Laplacian matrices.

\section{Extensions to weighted directed graphs}
In this Section we generalize the preceding results to weighted directed graphs.

Let $G_w=(V,E,W)$ be a {\it weighted directed graph}, where $V=\{v_1,\dots, v_p\}$ is the vertex set, $E=\{e_1,\dots, e_q\}$ the directed edge set, 
and $W=\{w_1,\dots, w_q\}$ is the set of positive weights associated to each of the directed edges. The {\it weight of a weighted directed graph} is defined as the product of the 
weights of its edges:
$$
\textrm{ weight of } G_w=\Pi_{i=1}^q w_i
$$

A weighted directed subgraph of $G_w$ is a weighted directed graph $G_w'=(V',E',W')$, where $V'\subseteq V$, $E'\subseteq E$, and $W'$ is the subset of $W$ that corresponds to the subset $E'$ of $E$. Fix a vertex $v_r$ in $V$. We say that a weighted directed  subgraph $G'_w$ is a {\it weighted outgoing (incoming) directed spanning tree rooted at $v_r$} if 
$G'=(V',E')$ is an outgoing (incoming) directed spanning tree rooted at $v_r$.

To a weighted directed graph $G_w$ we can associate two real $p\times p$ matrices, also called the {\it Laplacians of $G_w$}, which are defined as follows:
\begin{equation}\label{laplacian2}
L_{1,w}=D_{in,w}-A_{v,w},\textrm{ and }L_{2,w}=D_{out,w}-A_{v,w}^T,
\end{equation}
where:
\begin{itemize}
\item
 $D_{in,w}$ is a diagonal matrix such that for all $i=1,\dots ,p$, 
 $[D_{in,w}]_{ii}$ is equal to the sum of the weights of all incoming edges to vertex $v_i$.
  \item
$A_{v,w}$ is the {\it weighted vertex-adjacency matrix} of $G_w$, a real $p\times p$ matrix defined entry-wise as follows:
$$
[A_{v,w}]_{ij}=\begin{cases}w_k ,\textrm{ if  } e_k\textrm{ is the weighted directed edge from } v_i \textrm{ to  }v_j\\
0,\textrm{ if there is no weighted directed edge from } v_i \textrm{ to } v_j 
\end{cases}
$$
\item
 $D_{out,w}$ is a diagonal matrix such that for all $i=1,\dots ,p$, 
 $[D_{out,w}]_{ii}$ is equal to the sum of the weights of all outgoing edges of vertex $v_i$.
\end{itemize}
The {\it weighted incidence matrices} $N_{in,w}$ and $M_{out,w}$ can be associated to a weighted directed graph $G_w$ as well. The matrix $N_{in,w}$ is a $q\times p$ matrix defined 
entry-wise as follows:
$$
[N_{in,w}]_{ki}=\begin{cases}
w_k^{1/2},\textrm{ if directed edge } e_k \textrm{ points {\it to} vertex } v_i, \hspace{1cm}
\begin{tikzpicture}[colorstyle/.style={circle, draw=black,fill=black!20,thick, inner sep=2pt, minimum size=1 mm, outer sep=0pt},
    scale=2]    
\node (1) at (0,0)[colorstyle]{};
\node (2) at (1,0)[colorstyle]{$v_i$};
\tikzset{EdgeStyle/.append style = {->,bend right = 0}}
\Edge[label = $e_k$](1)(2)
\end{tikzpicture}
\\
0,\textrm{ otherwise}
\end{cases}
$$
and similarly, the matrix $M_{out,w}$ is a $p\times q$ matrix defined entry-wise as follows:
$$
[M_{out,w}]_{ik}=\begin{cases}
w_k^{1/2},\textrm{ if directed edge } e_k \textrm{ points {\it from} vertex } v_i, \hspace{1cm}
\begin{tikzpicture}[colorstyle/.style={circle, draw=black,fill=black!20,thick, inner sep=2pt, minimum size=1 mm, outer sep=0pt},
    scale=2]    
\node (1) at (0,0)[colorstyle]{$v_i$};
\node (2) at (1,0)[colorstyle]{};
\tikzset{EdgeStyle/.append style = {->,bend right = 0}}
\Edge[label = $e_k$](1)(2)
\end{tikzpicture}
\\
0,\textrm{ otherwise}
\end{cases}
$$
The key observation is that $L_{1,w}$ and $L_{2,w}$ can still be factored using the weighted incidence matrices, as in Lemma $\ref{factorization}$:
\begin{equation}\label{factorization2}
L_{1,w}=(N_{in,w}^T-M_{out,w})N_{in,w},\textrm{ and }L_{2,w}=(M_{out,w}-N_{in,w}^T)M^T_{out,w},
\end{equation}
Fix a vertex $v_r$ in the weighted directed graph $G_w$.
Denoting the reduced matrix $N_{in,w}^r$ ($M^r_{out,w}$) as the matrix obtained  by deleting the 
$r$th column ($r$th row) from $N_{in,w}$ ($M^r_{out,w}$), and $L^r_{1,w}$ ($L^r_{2,w}$) by deleting the $r$th row and the $r$th column 
from $L_{1,w}$ ($L_{2,w}$), we also have that:
\begin{equation}\label{factorization-reduced2}
L_1^r=((N^r_{in})^T-M_{out}^r)N_{in}^r,\textrm{ and }L_2^r=(M_{out}^r-(N^r_{in})^T)(M^r_{out})^T
\end{equation}
These factorizations enable a proof of a generalization of Tutte's Theorem (Theorem $\ref{tutte}$) to weighted graphs, as well as a 
generalization of Theorem $\ref{eigen}$:
\begin{stel} \label{tutte2} Let $G_w=(V,E,W)$ be a weighted directed graph. Then 
the sum of the weights of the weighted outgoing (incoming) directed spanning trees rooted at  $v_r$ is equal to $\deter(L_{1,w}^r)$ ($\deter(L_{2,w}^r)$).

Suppose that $x$ and $y$ are $p$-vectors, defined entry-wise as follows:
\begin{eqnarray*}
x_i&=&\textrm{ sum of weights of all weighted outgoing directed spanning trees rooted at } v_i, \textrm{ and }\\
y_i&=&\textrm{ sum of weights of all weighted incoming directed spanning trees rooted at } v_i,
\end{eqnarray*}
for all $i=1,\dots, p$. Then
\begin{equation}\label{e-vectors2}
L_{1,w}x=0=L_{2,w}y.
\end{equation}
\end{stel}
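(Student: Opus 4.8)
\textbf{Proof proposal for Theorem \ref{tutte2}.}

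The plan is to follow exactly the two-part structure already established for the unweighted case, substituting the weighted incidence matrices for the plain ones and tracking the weight factors $w_k^{1/2}$ that now appear. For the determinant formula, I would start from the factorization $L_{1,w}^r = ((N_{in,w}^r)^T - M_{out,w}^r)N_{in,w}^r$ in (\ref{factorization-reduced2}), set $B = (N_{in,w}^r)^T - M_{out,w}^r$ and $C = N_{in,w}^r$, and apply Binet--Cauchy to write $\deter(L_{1,w}^r) = \sum_{S\subseteq\{1,\dots,q\},\,|S|=p-1}\deter(B[S])\deter(C[S])$. The content of the proof is then to evaluate each term $\deter(B[S])\deter(C[S])$ over the index set $S$ corresponding to a subgraph $G'=(V,E')$ with $E'=\{e_{k_1},\dots,e_{k_{p-1}}\}$.

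First I would handle the case where $S$ indexes an outgoing directed spanning tree $T$ rooted at $v_r$. As in the unweighted proof, $C[S]=N_{in,w}^r[S]$ has exactly one nonzero entry in each row and column, but now that entry is $w_{k}^{1/2}$ rather than $1$; hence $C[S]=P\Lambda$ where $P$ is a permutation matrix and $\Lambda$ is diagonal, so $\deter(C[S])=\pm\prod_{j=1}^{p-1}w_{k_j}^{1/2}$, and similarly $C[S](C[S])^T$ and $(C[S])^TC[S]$ are diagonal. A clean way to finish is to observe $\deter(B[S])\deter(C[S]) = \deter(B[S]C[S]) = \deter(I_w - M_{out,w}^r[S]\,N_{in,w}^r[S]\cdot(\text{appropriate normalization}))$; more carefully, I would instead write $\deter(B[S])\deter(C[S]) = \deter\big((N_{in,w}^r)^T[S]\,N_{in,w}^r[S]\big) - (\text{contribution of }M)$ is not quite right, so the cleanest route is: factor $\deter(C[S])^2 = \deter((C[S])^TC[S]) = \prod_j w_{k_j}$ (the weight of $T$), and separately show $\deter(B[S])\deter(C[S]) = \deter(C[S])^2$ by the same nilpotency argument. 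Concretely, $\deter(B[S])\deter(C[S]) = \deter(B[S]C[S])$ where $B[S]C[S] = (N_{in,w}^r)^T[S]N_{in,w}^r[S] - M_{out,w}^r[S]N_{in,w}^r[S] = \Delta - D_w$, with $\Delta = \diag(w_{k_1},\dots,w_{k_{p-1}})$ (after reindexing rows by the non-root vertices) and $D_w$ the matrix with $[D_w]_{ij}=w_k$ when $e_k\in E'$ points from $v_{l_i}$ to $v_{l_j}$. Then $\deter(\Delta - D_w) = \deter(\Delta)\deter(I - \Delta^{-1}D_w)$, and $\Delta^{-1}D_w$ is nilpotent by the identical Pigeonhole/no-directed-cycle argument (a nonzero entry of a power corresponds to a directed walk in $T$, forcing a repeated vertex hence a cycle). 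Therefore $\deter(B[S])\deter(C[S]) = \deter(\Delta) = \prod_{j=1}^{p-1}w_{k_j}$, the weight of $T$.

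Second, for $S$ corresponding to a subgraph that is \emph{not} an outgoing spanning tree, I would reuse the three-case analysis verbatim: if some non-root vertex has in-degree $0$ in $G'$ then $N_{in,w}^r[S]$ has a zero column; if it has in-degree $\ge 2$ then $N_{in,w}^r[S]$ has two rows that are scalar multiples of the same standard basis vector (hence proportional, so the determinant vanishes --- a small strengthening of the unweighted ``identical rows'' observation); if the root has in-degree $\ne 0$ then there is a zero row; and if $G'$ has a directed cycle $e_{l_1},\dots,e_{l_n}$ then the weighted analogue of the column-cancellation argument applies --- the $l_j$-th column of $B[S]=(N_{in,w}^r)^T[S]-M_{out,w}^r[S]$ has entries $+w_{l_j}^{1/2}$ and $-w_{l_j}^{1/2}$, and I would take the linear combination $\sum_j w_{l_j}^{1/2}(\text{$l_j$-th column})$, or more simply note that each column is $w_{l_j}^{1/2}$ times a $\{0,\pm1\}$ vector and these $\{0,\pm1\}$ vectors sum to zero around the cycle, so $\deter(B[S])=0$. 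Summing over all $S$ gives $\deter(L_{1,w}^r) = \sum_{T}(\text{weight of }T)$; the $L_{2,w}^r$ statement follows by the symmetric argument. The eigenvector claim (\ref{e-vectors2}) then follows exactly as in Theorem \ref{eigen}: the column sums of $L_{1,w}$ and $L_{2,w}$ are zero by (\ref{factorization2}) since each column of $N_{in,w}^T - M_{out,w}$ has entries $w_k^{1/2}$ and $-w_k^{1/2}$, so cofactors along a fixed column are equal, and the diagonal cofactor $C_{ii}=\deter(L_{i}^i)$ equals $x_i$ (resp.\ $y_i$) by the weighted Tutte theorem just proved.

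\textbf{Main obstacle.} The only genuinely new wrinkle compared with the unweighted proof is bookkeeping the $w_k^{1/2}$ factors so that they recombine into the full weight $\prod w_{k_j}$: one has to notice that $C[S]$ contributes $\prod w_{k_j}^{1/2}$ through its permutation-times-diagonal structure while the $B[S]$ factor, via the $\deter(\Delta)\deter(I-\Delta^{-1}D_w)$ split, supplies the remaining $\prod w_{k_j}^{1/2}$, and that the nilpotency of $\Delta^{-1}D_w$ (rather than of $D_w$ itself) is what's needed. Everything else is a direct transcription; in particular no new graph-theoretic idea is required, and the Perron--Frobenius remarks carry over unchanged since $-L_{1,w}$ and $-L_{2,w}$ still have nonnegative off-diagonal entries and a zero principal eigenvalue.
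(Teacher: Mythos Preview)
Your proposal is correct and follows essentially the same route as the paper: factor $L_{1,w}^r$ via the weighted incidence matrices, apply Binet--Cauchy, evaluate each term as either the weight of a spanning tree or zero, and then repeat the cofactor argument of Theorem~\ref{eigen} verbatim for the eigenvector claim. If anything you are more careful than the paper's own sketch: you correctly note that one needs the nilpotency of $\Delta^{-1}D_w$ (not merely of $D_w$) to pass from $\deter(\Delta-D_w)$ to $\deter(\Delta)$, that the two rows in the in-degree~$\geq 2$ subcase are \emph{proportional} rather than identical, and that in the cycle case only a nontrivial linear combination of the columns vanishes---the paper's assertion that the plain sum of those columns is zero is literally false for unequal weights. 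One small slip: your first suggested coefficients $w_{l_j}^{1/2}$ should be $w_{l_j}^{-1/2}$, but your alternative formulation (each column is $w_{l_j}^{1/2}$ times a $\{0,\pm1\}$ vector, and those $\{0,\pm1\}$ vectors sum to zero around the cycle) is correct and suffices.
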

We shall skip the proof of Theorem $\ref{tutte2}$, and only highlight two instances where the proof of Theorem $\ref{tutte}$ needs to be modified slightly. 
The proof of Theorem $\ref{eigen}$ does not require any modifications. 
\begin{itemize}
\item
As in the first part of the proof of Theorem $\ref{tutte}$, we set
$$
B[S]=(N^r_{in,w})^T - M_{out}^r,\textrm{ and } C[S]=N^r_{in,w}.
$$
Here, $C[S]=N^r_{in,w}[S]$ is not necessarily a permutation matrix anymore, and $(\ref{jordan})$ is modified to:
$$
\deter(B[S])\deter(C[S])=\deter(Q-D),
$$
where 
$$
Q=(N^r_{in,w}[S])^T N^r_{in,w}[S],\textrm{ and }D=M^r_{out,w}[S]N^r_{in,w}[S].
$$
It can be shown that $Q$ is a diagonal matrix whose determinant equals the weight of the weighted spanning tree $T$. As before, $D$ is still 
a nilpotent matrix (because $D^{p-1}=0$), and thus 
$$
\deter(B[S])\deter(C[S])=\deter(Q-D)=\deter(Q)=\textrm{weight of } T.
$$
\item
Another modification must be made to the last paragraph of the proof of Theorem $\ref{tutte}$. We claim that the sum of the 
$l_1$th, $l_2$th, ... $l_n$th columns of $B[S]$ is still zero. 
In this case, each of the mentioned columns has exactly two non-zero entries, one being the square root of the weight of some directed edge, and the other 
being minus the square root of the weight of a directed edge (instead of a $+1$ and a $-1$). But as before, the $l_j$th and the $l_{j+1}$th columns have two non-zero entries of opposite sign in the same position, and consequently these columns add up to the zero vector. Thus, we still conclude that $\deter(B[S])=0$.
\end{itemize}
\newpage

\end{document}